\newtheorem{prethm}{{\bf Theorem}}
\newenvironment{thm}{\begin{prethm}{\hspace{-0.5
em}{\bf.}}}{\end{prethm}}
\newtheorem{prepro}{{\bf Theorem}}
\newtheorem{precor}{{\bf Corollary}}
\newenvironment{cor}{\begin{precor}{\hspace{-0.5
em}{\bf.}}}{\end{precor}}
\newtheorem{preconj}{{\bf Conjecture}}
\newenvironment{conj}{\begin{preconj}{\hspace{-0.5
em}{\bf.}}}{\end{preconj}}
\newtheorem{preremark}{{\bf Remark}}
\newtheorem{prelem}{{\bf Lemma}}
\newtheorem{preproof}{{\bf Proof.}}
\newenvironment{proof}[1]{\begin{preproof}{\rm
#1}\hfill{$\Box$}}{\end{preproof}}
\title{\bf\Large 
 Tutte's $3$-Flow Conjecture in $3$-tree-connected graphs
}
\author{
{\normalsize{\sc Morteza Hasanvand${}$},
}\vspace{3mm}
\\{\footnotesize{${}$\it Department of Mathematical
 Sciences, Sharif
University of Technology, Tehran, Iran}}
{\footnotesize{}}\\{\footnotesize{ $\mathsf{morteza.hasanvand@alum.sharif.edu }$ }}}
\date{}
\begin{document}
\maketitle
\begin{abstract}{
Tutte's $3$-flow conjecture says that every $4$-edge-connected graph admits a nowhere-zero $3$-flow. Kochol (2001) showed that it is enough to prove this conjecture for $5$-edge-connected graphs. Former, Jaeger, Linial, Payan, and Tarsi (1992) conjectured that every $5$-edge-connected graph is $Z_3$-connected and so it admits a nowhere-zero $3$-flow. In this note, we show that if the second conjecture would be true, then every $3$-tree-connected graph must also be $Z_3$-connected and so Tutte's $3$-flow conjecture can be extended to this family of graphs.
\\
\\
\noindent {\small {\it Keywords}: 
Tutte's $3$-Flow Conjecture;
 $Z_3$-connectivity; 
modulo orientation;
tree-connectivity.}} {\small
}
\end{abstract}
%
%
%
%
%
%
%
%
%
%
%
%
\section*{Introduction}
In this note, all graphs have no loop, but multiple edges are allowed.
Let $G$ be a graph, let $k$ be an integer, $k\ge 2$, and let $p:V(G)\rightarrow Z_k$ be a mapping such that $|E(G)|\stackrel{k}{\equiv} \sum_{v\in V(G)}p(v)$, where $Z_k$ is the cyclic group of order $k$. 
An orientation of $G$ is called {\it $p$-orientation}, if for every vertex $v$, $d_G^+(v)\stackrel{k}{\equiv}p(v) $, where $d_G^+(v)$ denotes the out-degree of $v$. 
A graph $G$ is called {\it $Z_3$-connected}, if it admits a $p$-orientation, for every mapping $p:V(G)\rightarrow Z_3$ 
satisfying $|E(G)|\stackrel{3}{\equiv} \sum_{v\in V(G)}p(v)$.
We say that a graph $G$ admits a noweherezero $3$-flow, if it admits a {\it $p$-orientation} in which each vertex $v$, $2p(v)\stackrel{3}{\equiv}d_G(v)$. According these definitions, if $G$ is $Z_3$-connected, then obviously it must admit a noweherezero $3$-flow. Note that these definitions are equivalent to their initial well-known definitions, see~\cite{Lai-2007, Lovasz-Thomassen-Wu-Zhang-2013}.
A graph $G$ is called $k$-tree-connected, if it contains $k$ edge-disjoint spanning trees. Note that every $2k$-edge-connected graph is also $k$-tree-connected~\cite{Nash-Williams-1961,Tutte-1961}.

Tutte's $3$-Flow Conjecture says that every $4$-edge-connected graph admits a nowhere-zero $3$-flow.
Jaeger, Linial, Payan, and Tarsi (1992) proposed a stronger conjecture which says every $5$-edge-connected graph is $Z_3$-connected. In 2001 Kochol~\cite{Kochol-2001} proved that if every $5$-edge-connected graph admits a nowhere-zero $3$-flow, then Tutte's $3$-Flow Conjecture is true.
\begin{conj}{\rm (\cite{Jaeger-Linial-Payan-Tarsi-1992})}\label{intro:conj:5-edge-connected}
{Let $G$ be a graph and let $p:V(G)\rightarrow Z_3$ be a mapping such that $|E(G)|\stackrel{3}{\equiv}\sum_{v\in V(G)}p(v)$.
 If $G$ is $5$-edge-connected, then it admits a $p$-orientation.
}\end{conj}

In 2012 Thomassen~\cite{Thomassen-2012} succeeded to confirm Conjecture~\ref{intro:conj:5-edge-connected} for $8$-edge-connected graph. Later, Lov{\'a}sz, Thomassen, Wu, and Zhang (2013)~\cite{Lovasz-Thomassen-Wu-Zhang-2013} improved Thomassen's result to the following version by pushing down the needed edge-connectivity by one. 
\begin{thm}{\rm (\cite{Lovasz-Thomassen-Wu-Zhang-2013})}\label{intro:thm:Lovase, Thomassen, Wu, Zhang}
{Let $G$ be a graph and let $p:V(G)\rightarrow Z_3$ be a mapping such that $|E(G)|\stackrel{3}{\equiv}\sum_{v\in V(G)}p(v)$.
 If $G$ is $6$-edge-connected, then it admits a $p$-orientation.
}\end{thm}

In~\cite{Han-Lai-Li-2018} the authors used a stronger version of their result to confirm Conjecture~\ref{intro:conj:5-edge-connected} for $4$-tree-connected graphs. In this note, we show that if Conjecture~\ref{intro:conj:5-edge-connected} would be true, then that conjecture together with Tutte's $3$-Flow Conjecture can be developed to $3$-tree-connected graphs. 
Note that this number is sharp, because the complete graph of order $4$ does not have as a nowhere-zero $3$-flow, while it is $2$-tree-connected. 
\begin{thm}{\rm (\cite{Han-Lai-Li-2018})}
{Let $G$ be a graph and let $p:V(G)\rightarrow Z_3$ be a mapping such that $|E(G)|\stackrel{3}{\equiv}\sum_{v\in V(G)}p(v)$.
 If $G$ is $4$-tree-connected, then it admits a $p$-orientation.
}\end{thm}
%
%
%
%
%
%
%
%
%
%
%
%
%
\section{Orientations modulo $3$ in $3$-tree-connected graphs}
%
%
The following theorem shows a consequence of Conjecture~\ref{intro:conj:5-edge-connected}.

\begin{thm}\label{thm:main}
{Assume that Conjecture~\ref{intro:conj:5-edge-connected} is true. Let $G$ be a graph, let $p:V(G)\rightarrow Z_3$ be a mapping such that $|E(G)|\stackrel{3}{\equiv}\sum_{v\in V(G)}p(v)$. 
If $G$ is $3$-tree-connected, then
it admits a $p$-orientation.
}\end{thm}
\begin{proof}
{Let $k=3$ and $\lambda=5$. By induction on $|V(G)|$. For $|V(G)|=1$, the proof is clear. 
So, suppose $|V(G)|\ge 2$. 
If $G$ is $\lambda$-edge-connected, then it follows from the assumption.
Thus, we assume $G$ is not $\lambda$-edge-connected.
 Let $T_1\ldots, T_{\lambda-2}$ be $\lambda-2$ edge-disjoint spanning trees of $G$.
Let $\mathcal{C}$ be an edge cut of $G$ with the minimum size.
Note that $|\mathcal{C}|\le \lambda-1$ and $G\setminus \mathcal{C}$ is composed by two disjoint connected graphs $G_1$ and $G_2$.
Let $r\in \{0,\ldots, k-1\}$ be the unique integer with 
$$r+|E(G_1)|\stackrel{k}{\equiv} \sum_{v\in V(G_1)}p(v).$$
Suppose first that $|E(\mathcal{C})|=\lambda-2$. 
This implies that for each tree $T_j$, $|E(\mathcal{C})\cap E(T_j)|=1$, and hence every graph $G_j$ contains $\lambda-2$ edge-disjoint spanning trees. 
Since $|E(\mathcal{C})|=\lambda-2\ge k-1\ge r$, one can orient $r$ edges of $\mathcal{C}$ from $G_1$ to $G_2$ and $|E(\mathcal{C})|-r$ remaining edges from $G_2$ to $G_1$.
Now, for every graph $G_j$,
let $p_j:V(G_j)\rightarrow Z_k$ be a mapping such that for each vertex 
$v\in V(G_j)$, $p_j(v)=p(v)-q_j(v)$, where $q_j(v)$ is the number of edges of $\mathcal{C}$ directed away from $v$. 
Since $r=\sum_{v\in V(G_1)}q_1(v)$, we have $|E(G_1)|\stackrel{k}{\equiv}\sum_{v\in V(G_1)}p_1(v)$.
Also, since $|\mathcal{C}|-r=\sum_{v\in V(G_2)}q_2(v)$ and $|E(G)|\stackrel{k}{\equiv}\sum_{v\in V(G)}p(v)$, we have $|E(G_2)|\stackrel{k}{\equiv}\sum_{v\in V(G_2)}p_2(v)$. Thus the induction hypothesis implies that every graph $G_j$ has a $p_j$-orientation. It is not hard to see that these orientations of $G_1$, $\mathcal{C}$, and $G_2$ induce a $p$-orientation for $G$.
 In future cases, we leave some details for the reader in order to apply the induction hypothesis.

Now, suppose that $|E(\mathcal{C})|=\lambda-1$. 
Without loss of generality assume that $|E(\mathcal{C})\cap E(T_1)|=2$ and $|E(\mathcal{C})\cap E(T_j)|=1$ for each tree $T_j$ with $j>1$. 
Also, without loss of generality assume that the spanning graph of $G_1$ with the edge set $E(G_1)\cap E(T_1)$ is connected. Therefore, $G_1$ contains $\lambda-2$ edge-disjoint spanning trees.
 In $G$, contract all vertices of $G_1$ to a single vertex (by removing loops) and call the resulting graph $G'_2$. It is easy to see that $G'_2$ contains $\lambda-2$ edge-disjoint spanning trees. 
Let $p'_2:V(G'_2)\rightarrow Z_k$ be a mapping such that for each vertex $v\in V(G_2')\cap V(G_2)$, $p'_2(v)=p(v)$ and for the vertex $u\in V(G_2')$ corresponding to $G_1$, $p'_2(u)=r$. 

Now, if $|V(G_1)|> 1$ then by the induction hypothesis, the graph $G_2'$ has a $p'_2$-orientation.
This orientation of $G_2'$ induces an orientation for $\mathcal{C}$.
In this case, let $p_1:V(G_1)\rightarrow Z_k$ be a mapping such that for each vertex $v\in V(G_1)$, $p_1(v)=p(v)-q(v)$, where $q(v)$ is the number of edges of $ \mathcal{C}$ directed away from $v$. 
At present, by the induction hypothesis, the graph $G_1$ has a $p_1$-orientation. It is not hard to see that the $p_1$-orientation of $G_1$ and $p'_2$-orientation of $G'_2$ induce a $p$-orientation for~$G$. 

In the final case, set $V(G_1)=\{u\}$. Let $xu$ and $uy$ be the two edges of $T_1$ incident to $u$.
 Now, remove the vertex $u$ from $G$ and add a new edge $xy$ to $G$.
 Call the resulting graph $H$. 
Since $T_1$ is a tree with no multiple edges, $x\neq y$ and so $H$ has no loop. Note also that $|E(H)|=|E(G)|-|E(\mathcal{C})|+1$.
It is not hard to see that $H$ contains $\lambda-2$ edge-disjoint spanning trees.
In $G$ orient the edge $xu$ from $x$ to $u$ and orient the edge $uy$ from $u$ to $y$. 
Next, orient exactly $r_u-1$ edges of $E(\mathcal{C})\setminus \{xu,uy\}$ 
away from $u$ and orient all remaining edges of $E(\mathcal{C})$ in opposite direction, 
where $r_u\in \{1,\ldots, k\}$ is the unique integer with
$r_u\stackrel{k}{\equiv}p(u).$
Since $|E(\mathcal{C})\setminus \{xu, uy\}|=\lambda-3\ge k-1\ge r_u-1$, this orientation of $\mathcal{C}$ is possible. 
Now, let $p_0:V(H)\rightarrow Z_k$ be a mapping such that for each vertex $v\in V(H)$, $p_0(v)=p(v)-q(v)$, where $q(v)$ is the number of edges of $\mathcal{C}\setminus \{xu,uy\}$ directed away from $v$.
By the induction hypothesis, $H$ has a $p_0$-orientation. 
It is not difficult to see that the $p_0$-orientation of $H$ and the orientation of $\mathcal{C}$ induce a $p$-orientation for $G$. 
Notice that if the edge $xy$ of $H$ oriented from $y$ to $x$, for inducing, the direction of two edges $xu$ and $uy$ in $G$ must be reversed. This completes the proof.
}\end{proof}
By combining Theorem 2.1 in \cite{Equitable} with Theorem~\ref{thm:main}, one can derive the following corollary.
\begin{cor}
{Assume that Conjecture~\ref{intro:conj:5-edge-connected} is true. 
Let $G$ be a $3$-tree-connected graph. Then 
 $G$ does not have exactly one vertex $z$ satisfying $d_G(z) \stackrel{3}{\not\equiv}0$ 
if and only if it can be edge-decomposed into three factors $G_1$, $G_2$, and $G_3$ such that for each $v\in V(G_i)$, $|d_{G_i}(v)-d_G(v)/3|< 1$.
}\end{cor}
%
%
%
%
\section{Conclusion: A generalization}
In 2006 Bar{\'a}t and Thomassen~\cite{Barat-Thomassen-2006} conjectured that for every tree $T$ there exists a natural number $k_T$ such that every $k_T$-edge-connected simple graph of size divisible by $|E(T)|$ has a $T$-edge-decomposition. 
However, this conjecture investigates only the existence of $k_T$, an upper bound on $k_T$ was stated in~\cite{Barat-Gerbner-2014} as the following conjecture. 
A consequence of Theorem~\ref{thm:main} says that the following conjecture is true, if Conjecture~\ref{intro:conj:5-edge-connected} would be true (using the special case $p=0$).
\begin{conj}{\rm(\cite{Barat-Gerbner-2014})}\label{conj:star}
{Let $G$ be a simple graph of size divisible by $k$ with $k\ge 1$, and let $T$ be a tree of size $k$. If $G$ is $k$-tree-connected, then it admits a $T$-edge-decomposition.
}\end{conj}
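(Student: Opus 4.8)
The plan is to approach this conjecture by induction on the size $k$ of the tree $T$, using the two results of this note as engines for the extreme shapes of $T$ and then interpolating. When $T$ is the star $K_{1,k}$, a $T$-decomposition is nothing but a partition of $E(G)$ into groups of $k$ edges sharing a common endpoint, which is exactly a degree-divisibility condition of the type delivered by Corollary~\ref{cor:main}; when $T$ is the path $P_k$, the natural route is to fix a modulo-$k$ orientation---available through Theorem~\ref{thm:main}, since $k$ edge-disjoint spanning trees supply the connectivity that the relevant contracted pieces require---and then cut an Eulerian-type closed walk into blocks of $k$ edges, letting the prescribed out-degrees govern the behaviour at the branch vertices. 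The goal is to treat a general tree as intermediate between these two solved shapes.

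Concretely, I would split $T$ into a proper subtree $T_1$ of size $k_1$ and a subtree $T_2$ of size $k_2=k-k_1$ meeting $T_1$ in a single vertex $w$, and then try to decompose $G$ into two spanning subgraphs $G_1$ and $G_2$ with $G_i$ carrying $k_i$ edge-disjoint spanning trees. Setting $N=|E(G)|/k$, a $T$-decomposition forces $|E(G_i)|=k_iN$, so the divisibility hypotheses needed to recurse are automatic once the split is made proportional; the engine for producing such a split with the prescribed spanning-tree packing on each side, together with the degree control that the star-like pieces demand, is a Corollary~\ref{cor:main}-style decomposition applied to a suitable bipartite auxiliary graph. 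One then decomposes each $G_i$ into copies of $T_i$ by induction and attempts to rebuild copies of $T$ by pasting a copy of $T_1$ to a copy of $T_2$ along their common vertex $w$.

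The hard part will be this last pasting step. A successful reconstruction requires a \emph{perfect matching} between the $N$ copies of $T_1$ and the $N$ copies of $T_2$ in which matched copies share precisely the vertex playing the role of $w$, and nothing in the inductive hypothesis controls where that attachment vertex lands. For paths this registration is invisible (the Eulerian walk threads the blocks automatically) and for stars it is vacuous, but for a branching tree there is no apparent global mechanism to align the exposed attachment vertices, and since the hypothesis of $k$ edge-disjoint spanning trees is conjectured to be best possible there is no connectivity slack left for local surgery. To make the matching materialise one would presumably have to strengthen the inductive statement so that each produced copy carries a prescribed exposed vertex, and then prove a reinforced, vertex-labelled version of Corollary~\ref{cor:main} that distributes these exposed vertices as required. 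I expect this registration problem---rather than the connectivity estimates or the divisibility bookkeeping, both of which the tools above already handle---to be the genuine obstacle, and it is essentially the reason the statement is still open.
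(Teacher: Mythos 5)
The statement you were asked about is not a theorem of this paper at all: it is an open conjecture of Bar\'at and Gerbner which the paper merely records as motivation, and the paper offers no proof of it. Your proposal, to its credit, does not claim one either, and the registration obstacle you isolate at the pasting step is genuine: nothing in an inductive $T_1$/$T_2$ scheme controls where the attachment vertex $w$ lands, and since $k$ edge-disjoint spanning trees is conjectured to be tight there is no slack for local repair. However, there is a second, more basic gap earlier in your plan: your two ``solved'' extreme shapes are not in fact solved at the conjectured threshold. Theorem~\ref{thm:main} requires $\lambda_k-2$ edge-disjoint spanning trees, and by definition $\lambda_k\ge k+2$, while the best known upper bound (Lov\'asz--Thomassen--Wu--Zhang) gives only $\lambda_k\le 3k-3$ for odd $k$ and $\lambda_k\le 3k-2$ for even $k$; so the modulo-$k$ orientation you invoke for the path case is currently available only under up to $3k-5$ spanning trees, not $k$. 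Likewise Corollary~\ref{cor:main} consumes $2r+\lambda_k-2$ trees, far exceeding the hypothesis. Indeed the paper itself observes that the star case of this very conjecture is equivalent to Conjecture~\ref{conj:f} (an $f$-orientation from exactly $k$ edge-disjoint spanning trees), which is explicitly open --- so even your star base case is the open problem in disguise rather than a consequence of Corollary~\ref{cor:main}.

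Consequently the interpolation scheme rests on engines that do not fire under the stated hypothesis: a proportional split of $G$ into $G_1$ and $G_2$ carrying $k_1$ and $k_2$ spanning trees respectively would, via any Corollary~\ref{cor:main}-style argument, already require more trees than the $k$ available, leaving nothing for the inductive calls, and the base cases themselves would need $\lambda_k\le k+2$, which is unknown (the paper notes that even $\lambda_k\le 2k-1$ is only a hope). So your write-up should be read as a research program with two genuine gaps --- the unestablished base cases at the threshold $k$, and the matching/registration problem you correctly flagged --- and the statement must remain labelled a conjecture, exactly as the paper presents it.
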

When $k\ge 3$, the special case $k$-star of the above-mentioned conjecture can conclude the next conjecture (more precisely they are equivalent), using an idea that was used in~\cite{Lai-2007}. To see this, for every vertex $v$ of the graph $G$, replace a large graph $H_v$ containing $k$ edge-disjoint spanning trees with $|E(H_v)|+p(v) \stackrel{k}{\equiv}0$ such that after replacing the resulting graph forms a simple graph.
Since the new graph contains $k$ edge-disjoint spanning trees and its size is divisible by $k$, by the assumption it admits a $k$-star-decomposition. Now, orient the edges of these stars away from their centres. 
It is not difficult to see that this orientation induces a $p$-orientation for~$G$.
\begin{conj}\label{conj:f}
{Let $G$ be a graph, let $k$ be an integer, $k\ge 3$, and let $p:V(G)\rightarrow Z_k$ be a mapping such that $|E(G)|\stackrel{k}{\equiv}\sum_{v\in V(G)}p(v)$. If $G$ is $k$-tree-connected, then
it has a $p$-orientation.
}\end{conj}
%
%
 
%


\begin{thebibliography}{10}

\bibitem{Barat-Gerbner-2014}
J.~Bar{\'a}t and D.~Gerbner, Edge-decomposition of graphs into copies of a tree with four edges, Electron. J. Combin., 21 (2014),~Paper 1.55, 11.

\bibitem{Barat-Thomassen-2006}
J.~Bar{\'a}t and C.~Thomassen, Claw-decompositions and {T}utte-orientations, J. Graph Theory 52 (2006)~135--146.

\bibitem{Han-Lai-Li-2018}
 Han, Lai, and Li, Nowhere-zero $3$-flow and $Z_3$-connectedness in graphs with four edge-disjoint spanning trees, J. Graph Theory 88 (2018) 577--591.

\bibitem{Equitable}
M.~Hasanvand, Equitable factorizations of edge-connected graphs, arXiv:1906.04325.

\bibitem{Jaeger-Linial-Payan-Tarsi-1992}
F. Jaeger, N. Linial, C. Payan, and M. Tarsi, Group connectivity of graphs---a nonhomogeneous analogue of nowhere-zero flow properties, J. Combin. Theory Ser. B 56 (1992)~165--182.

\bibitem{Kochol-2001}
M.~Kochol, An equivalent version of the 3-flow conjecture, J. Combin. Theory Ser. B 83 (2001)~258--261.

\bibitem{Lai-2007}
 L.~Lai, Mod {$(2p+1)$}-orientations and {$K_{1,2p+1}$}-decompositions, SIAM J. Discrete Math. 21 (2007)~844--850.

\bibitem{Lovasz-Thomassen-Wu-Zhang-2013}
 L.M. Lov{\'a}sz, C.~Thomassen, Y.~Wu, and C.-Q. Zhang, Nowhere-zero 3-flows and modulo {$k$}-orientations, J. Combin. Theory Ser. B 103 (2013)~587--598.

\bibitem{Nash-Williams-1961}
C.St.J.A. Nash-Williams, Edge-disjoint spanning trees of finite graphs, J. London Math. Soc. 36 (1961)~445--450.

\bibitem{Thomassen-2012}
C.~Thomassen, The weak 3-flow conjecture and the weak circular flow conjecture, J. Combin. Theory Ser. B 102 (2012)~521--529.

\bibitem{Tutte-1961}
 W.T. Tutte, On the problem of decomposing a graph into {$n$} connected factors, J. London Math. Soc. 36 (1961)~221--230.

\end{thebibliography}
\end{document}